\newtheorem{theo}{Theorem}
\newtheorem{prop}[theo]{Proposition}
\newtheorem{lemm}[theo]{Lemma}
\theoremstyle{definition}
\newtheorem{defi}[theo]{Definition}
\newenvironment{lproof}{\emph{Proof of Lemma.}}{ \qed \par}
\newcommand{\be}{\begin{eqnarray*}}
\newcommand{\ee}{\end{eqnarray*}}
\newcommand{\beqa}{\begin{eqnarray}}
\newcommand{\eeqa}{\end{eqnarray}}
\newcommand{\ba}{\begin{array}}
\newcommand{\ea}{\end{array}}
\newcommand{\mf}{\mathfrak}
\newcommand{\wh}{\widehat}
\newcommand{\mbb}{\mathbb}
\newcommand{\wt}{\widetilde}
\begin{document}

\title{Subalgebras of Lie algebras with non-degenerate restriction of the Killing form}
\author{Stuart Armstrong}
\date{2007}
\maketitle
\begin{abstract}
Let $\mf{g}$ be any finite-dimensional Lie algebra with Killling form $B$. Let $\mf{h}$ be a subalgebra of $\mf{g}$ on which the Killing form is non degenerate. Then $\mf{h}$ is reductive.
\end{abstract}

\begin{theo}
Let $\mf{g}$ be any finite-dimensional Lie algebra with Killling form $B$. Let $\mf{h}$ be a subalgebra of $\mf{g}$ on which the Killing form is non degenerate. Then $\mf{h}$ is reductive.
\end{theo}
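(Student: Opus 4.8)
The plan is to exploit the fact that the restriction $\Phi := B|_{\mf h}$ is not merely an invariant non-degenerate symmetric form on $\mf h$, but is the trace form of an honest representation of $\mf h$. Since $\mf h$ is a subalgebra, the adjoint action of $\mf g$ restricts to a representation $\rho : \mf h \to \mathfrak{gl}(\mf g)$, $\rho(X) = \mathrm{ad}_X$, and for $X,Y \in \mf h$ one has $\Phi(X,Y) = B(X,Y) = \mathrm{tr}_{\mf g}(\mathrm{ad}_X \mathrm{ad}_Y) = \mathrm{tr}_{\mf g}(\rho(X)\rho(Y))$. This is precisely the feature that distinguishes $\Phi$ from an arbitrary invariant form: non-degeneracy of an abstract invariant form does not force reductivity (there are non-reductive quadratic Lie algebras), whereas being realised as a trace form of a genuine representation does.

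Recall that $\mf h$ is reductive if and only if its radical equals its centre, i.e. if and only if $[\mf h, \mf r] = 0$, where $\mf r = \mathrm{rad}(\mf h)$; the centre is always contained in the radical, so it suffices to show that the radical is central. Thus the whole theorem reduces to the single inclusion $[\mf h, \mf r] \subseteq \mathrm{rad}(\Phi)$: since $\Phi$ is non-degenerate, $\mathrm{rad}(\Phi) = 0$, which forces $[\mf h, \mf r] = 0$ and hence $\mf r = \mf z(\mf h)$.

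To prove $[\mf h, \mf r] \subseteq \mathrm{rad}(\Phi)$, set $\mf a = [\mf h, \mf r]$, an ideal of $\mf h$, and combine two inputs (extending scalars to an algebraic closure where convenient, which affects neither nilpotency nor the vanishing of traces). First, a classical consequence of Lie's theorem: because $\mf r$ is a solvable ideal, every element of $[\mf h, \mf r]$ acts by a nilpotent operator in any finite-dimensional representation, so $\rho(\mf a)$ consists of nilpotent endomorphisms of $\mf g$ (Lie's theorem triangularises $\rho(\mf r)$, and the invariance lemma shows the resulting weights vanish on $[\mf h, \mf r]$, so these elements become strictly triangular). Second, and this is the crux, I claim that whenever $\mf a' \trianglelefteq \mf l$ is an ideal of a linear Lie algebra $\mf l \subseteq \mathfrak{gl}(W)$ all of whose elements are nilpotent, then $\mathrm{tr}(AB) = 0$ for every $A \in \mf a'$ and $B \in \mf l$. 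Applying this with $\mf l = \rho(\mf h)$ and $\mf a' = \rho(\mf a)$ yields $\Phi(\mf a, \mf h) = 0$, i.e. $\mf a \subseteq \mathrm{rad}(\Phi)$, as required.

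The obstacle is exactly this second claim, since mere nilpotency of $\rho(\mf a)$ is not enough: the second factor $B$ ranges over all of the non-solvable algebra $\mf l$, so one cannot simply triangularise everything at once. The idea is to use the ideal structure to build an $\mf l$-stable flag adapted to $\mf a'$. By Engel's theorem the associative algebra generated by $\mf a'$ is nil, so the canonical subspaces $W_i = \{w \in W : b_1\cdots b_i\, w = 0 \ \text{for all } b_j \in \mf a'\}$ form a flag with $\mf a' \cdot W_i \subseteq W_{i-1}$. Because $[\mf l, \mf a'] \subseteq \mf a'$, commuting any $B \in \mf l$ leftward through a product of $i$ elements of $\mf a'$ reproduces the product with $B$ on the far left plus a sum of products still having $i$ factors drawn from $\mf a'$ (each commutator $[b_j, B]$ lands back in $\mf a'$); evaluating on $w \in W_i$ then gives $Bw \in W_i$, so each $W_i$ is $\mf l$-stable. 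With respect to this flag every $B \in \mf l$ is block-upper-triangular while every $A \in \mf a'$ is strictly upper-triangular, whence $AB$ is strictly upper-triangular and $\mathrm{tr}(AB) = 0$. This establishes the claim, and with it the theorem.
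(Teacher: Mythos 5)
Your proposal is correct, but it follows a genuinely different --- and considerably more economical --- route than the paper. The paper complexifies, kills the centre, takes a Levi decomposition $\mf{h} = \mf{s} \ltimes \mf{k}$, and then works hard inside $\mf{gl}(\mf{g})$ (triangularising the solvable part, analysing extended eigenspaces, splitting into cases) to show successively that $B$ is non-degenerate on $\mf{k}$, that $\mf{k}$ is abelian, and that $\mf{s}$ acts trivially on $\mf{k}$. You instead isolate the one structural fact that matters --- $B|_{\mf{h}}$ is the trace form of the genuine representation $\mathrm{ad}\colon \mf{h} \to \mf{gl}(\mf{g})$ --- and reduce the theorem to the inclusion $[\mf{h},\mathrm{rad}(\mf{h})] \subseteq \mathrm{rad}(B|_{\mf{h}})$, obtained by combining (i) the consequence of Lie's theorem that $[\mf{h},\mathrm{rad}(\mf{h})]$ acts nilpotently in every finite-dimensional representation with (ii) the Engel-type lemma that an ideal consisting of nilpotent operators is orthogonal to the whole linear algebra under the trace form. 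Both ingredients are standard, and your verification of (ii) via the $\mf{l}$-stable flag $W_i$ is sound (the commutator bookkeeping that shows $BW_i \subseteq W_i$ is exactly right); this is in essence the classical Bourbaki proof that a Lie algebra admitting a representation with non-degenerate associated trace form is reductive. What your route buys: no Levi decomposition, no choice of bases, no case analysis, and a clean explanation of why the hypothesis works at all --- an arbitrary non-degenerate invariant form would not suffice, but a trace form of an honest representation does. What the paper's route buys: explicit intermediate facts (non-degeneracy of $B$ on the radical, abelianness of the radical, the decomposition $\mf{h} = \mf{s} \times \mf{k}$), though all of these also fall out immediately from your conclusion $[\mf{h},\mathrm{rad}(\mf{h})] = 0$. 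One small point of hygiene, shared with the paper: the appeal to Lie's theorem tacitly restricts the statement to characteristic zero, after extension of scalars to an algebraic closure.
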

Any Lie algebra $\mf{h}$ is the semi-direct product of a semi-simple algebra $\mf{s}$ with a solvable algebra $\mf{k}$. We shall prove the result through a series of interim results.

If $\mf{g}$ is not a complex algebra, we may include it as $\mf{g} \times 1$ into $\mf{g}_{\mbb{C}} = \mf{g} \otimes_{\mbb{R}} \mbb{C}$; the Killing form of $\mf{g}_{\mbb{C}}$ resticted to $\mf{g}$ is twice the Killing form of $\mf{g}$ -- in particular, $\mf{h}$ remains non-degenerate. So without loss of generality, we may assume that $\mf{g}$ is a complex algebra.

Furthermore:
\begin{lemm}
Let $Z(\mf{g})$ be the centre of $\mf{g}$, and $\pi$ the projection $\mf{g} \to \mf{g} / Z(\mf{g})$. Then $\pi$ is injective on $\mf{h}$, the Killing form of $\mf{g}$ descends naturally to the Killing form of $\mf{g} / Z(\mf{g})$, and thus $\pi(\mf{h})$ is a subalgebra of $\mf{g} / Z(\mf{g})$, isomorphic with $\mf{h}$, on which the Killing form is non-degenerate.
\end{lemm}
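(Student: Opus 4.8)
The plan is to establish three things about the projection $\pi : \mf{g} \to \mf{g}/Z(\mf{g})$: that the Killing form descends to the quotient, that $\pi$ is injective on $\mf{h}$, and that consequently the descended form restricts non-degenerately to $\pi(\mf{h})$. I would begin with the descent of the Killing form, since this is the structural fact that drives everything else. The key observation is that the center $Z(\mf{g})$ lies in the radical of the Killing form $B$: if $z \in Z(\mf{g})$ then $\mathrm{ad}(z) = 0$, so $B(z, x) = \mathrm{tr}(\mathrm{ad}(z)\,\mathrm{ad}(x)) = 0$ for all $x$. Thus $B$ is well-defined on $\mf{g}/Z(\mf{g})$ as a symmetric bilinear form. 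To see it is actually the Killing form of the quotient, I would note that $\mathrm{ad}_{\mf{g}}(x)$ acts as zero on $Z(\mf{g})$, so in a basis adapted to the flag $Z(\mf{g}) \subset \mf{g}$ the matrix of $\mathrm{ad}_{\mf{g}}(x)$ is block upper-triangular with a zero block on $Z(\mf{g})$ and the block $\mathrm{ad}_{\mf{g}/Z(\mf{g})}(\pi(x))$ on the quotient; taking traces of products gives $B_{\mf{g}}(x,y) = B_{\mf{g}/Z(\mf{g})}(\pi(x), \pi(y))$.

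Next I would prove injectivity of $\pi$ on $\mf{h}$. The kernel of $\pi|_{\mf{h}}$ is $\mf{h} \cap Z(\mf{g})$. Since $Z(\mf{g})$ lies in the radical of $B$, any element of $\mf{h} \cap Z(\mf{g})$ is $B$-orthogonal to all of $\mf{g}$, in particular to all of $\mf{h}$; but $B$ restricted to $\mf{h}$ is non-degenerate by hypothesis, so such an element must be zero. Hence $\mf{h} \cap Z(\mf{g}) = 0$ and $\pi|_{\mf{h}}$ is injective. Because $\pi$ is a Lie algebra homomorphism, its injective restriction to the subalgebra $\mf{h}$ gives a Lie algebra isomorphism $\mf{h} \cong \pi(\mf{h})$, and $\pi(\mf{h})$ is a subalgebra of $\mf{g}/Z(\mf{g})$.

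Finally, for non-degeneracy on the image, I would use the descent identity directly: for $\pi(x), \pi(y) \in \pi(\mf{h})$ with $x, y \in \mf{h}$ we have $B_{\mf{g}/Z(\mf{g})}(\pi(x),\pi(y)) = B_{\mf{g}}(x,y)$. If $\pi(x)$ is in the radical of the descended form restricted to $\pi(\mf{h})$, then $B_{\mf{g}}(x,y) = 0$ for all $y \in \mf{h}$, so $x$ lies in the radical of $B|_{\mf{h}}$, which is trivial; combined with injectivity this forces $\pi(x) = 0$. I expect the only real subtlety to be the verification that the descended form is genuinely the Killing form of the quotient rather than merely some descended bilinear form; this is precisely the block-triangular trace computation above, and it rests on the fact that $Z(\mf{g})$ is not just $B$-radical but an $\mathrm{ad}$-invariant subspace on which every $\mathrm{ad}(x)$ vanishes. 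Everything else is a direct consequence of the non-degeneracy hypothesis forcing intersections with the $B$-radical to be zero.
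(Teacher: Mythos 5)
Your proposal is correct and follows essentially the same route as the paper: $Z(\mf{g})$ lies in the radical of $B$ because $\mathrm{ad}(z)=0$, so $B$ descends and non-degeneracy on $\mf{h}$ forces $\mf{h}\cap Z(\mf{g})=0$; the identification of the descended form with the Killing form of the quotient is the same trace argument, which you merely spell out more explicitly via the block-triangular matrix of $\mathrm{ad}(x)$ adapted to $Z(\mf{g})\subset\mf{g}$. No gaps.
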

\begin{lproof}
Note that $Z(\mf{g})$ is trivially an ideal, so $\pi$ is an algebra homomorphism. Let $g \in \mf{g}$, and $z \in Z(\mf{g})$. Then since $ad_z = 0$, $Z(\mf{g})$ is orthogonal, via the Killing form $B$ on $\mf{g}$, to all of $\mf{g}$. Thus $B$ descends to a bilinear form on $\mf{g}/Z(\mf{g})$. Since $\mf{h}$ is non-degenerate under $B$, $\mf{h} \cap Z(\mf{g}) = 0$ and thus $\pi$ is injective on $\mf{h}$.

It suffices to prove that $B$ is equal to the Killing form on the image algebra. But this is evident, as $ad_{g} Z(\mf{g}) = 0$, so the action of $ad_g$ descendes non-trivially to an action on $\mf{g}/Z(\mf{g})$ -- and this descended action is equal to the action of $ad_{\pi(g)}$. Moreover, taking the trace of $ad_{g_1} ad_{g_2}$ on $\mf{g}$ is the same as taking the trace of the action of $ad_{\pi(g_1)} ad_{\pi(g_2)}$ on $\mf{g}/Z(\mf{g})$.
\end{lproof}
By itterating this procedure as often as needed, we may assume that $\mf{g}$ has no centre -- equivalently, that the embedding $\mf{g} \subset \mf{gl}(\mf{g})$ via the adjoint representation is faithfull.

Let $N_V$ be the trace form on $\mf{gl}(V,\mbb{C})$ (i.e.~$N_V(A,C) =$ trace $AC$, with $AC$ seen as an endomorphism of $V$). Let $\rho$ be the adjoint representation of $\mf{g}$. Then by definition of the Killing form,
\be
B(X,Y) = N_{\mf{g}}(\rho(X) , \rho(Y)).
\ee
We will thus embbed $\mf{g}$ into $\mf{gl}(\mf{g})$, and use the trace form $N$ of this algebra. If we choose a basis $\{e_i\}$ of $\mf{g}$ with dual basis $\{e_i^*\}$, then $N$ is given by
\be
N(e_i \otimes e_j^*, e_k \otimes e_l^*) = \delta_{il}\delta_{jk}.
\ee
Since $\mf{k}$ is solvable, we may use conjugation to express it in upper-triangular form. This does not affect $N$; so assume that the $\{e_i\}$ are chosen so that $\mf{k}$ is upper-triangular. We will later need to construct a new basis in which ideals of $\mf{k}$ have a particularly simple form. To do so, we will need several results.

\begin{defi}
The space $A_{\lambda}^k$, the $k$-th extended eigenspace for the matrix $A$ with eigenvalue $\lambda$, is defined inductively as the space such that $A_{\lambda}^0 = 0$ and for all $v_k \in A_{\lambda}^k$,
\be
A(v_k) = \lambda v_k + v_{k-1},
\ee
for $v_{k-1}$ a section of $A_{\lambda}^{k-1}$. The maximal extended eigenspace for $\lambda$ is defined as $A_{\lambda} = lim_{k \to \infty} A_{\lambda}^k$.
\end{defi}

\begin{lemm}
If $A$ and $B$ commute, then $B$ maps all $A_{\lambda}^k$ to itself.
\end{lemm}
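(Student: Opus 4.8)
The plan is to proceed by induction on $k$, arguing directly from the inductive definition of $A_\lambda^k$ rather than first rephrasing everything in terms of kernels (though it is worth noting in passing that the recursion $A(v_k) = \lambda v_k + v_{k-1}$ with $v_{k-1} \in A_\lambda^{k-1}$ simply identifies $A_\lambda^k$ with $\ker (A - \lambda\,\mathrm{Id})^k$, so the statement is the standard fact that commuting operators preserve each other's generalized eigenspaces). The single ingredient I will exploit is that $AB = BA$, and hence that $A - \lambda\,\mathrm{Id}$ commutes with $B$ as well.

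For the base case $k = 0$, we have $A_\lambda^0 = 0$, and $B(0) = 0 \subseteq A_\lambda^0$, so there is nothing to check. For the inductive step, I would assume the hypothesis $B(A_\lambda^{k-1}) \subseteq A_\lambda^{k-1}$ and take an arbitrary $v \in A_\lambda^k$. By definition, $A(v) = \lambda v + u$ for some $u \in A_\lambda^{k-1}$. Applying $B$ and using commutativity, I compute
\be
A(Bv) = B(Av) = B(\lambda v + u) = \lambda (Bv) + Bu.
\ee
The inductive hypothesis gives $Bu \in A_\lambda^{k-1}$, so $Bv$ satisfies exactly the defining relation for membership in $A_\lambda^k$, namely $A(Bv) = \lambda (Bv) + (\text{an element of } A_\lambda^{k-1})$. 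Hence $Bv \in A_\lambda^k$, closing the induction.

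Honestly, there is no serious obstacle here: the whole content is the one-line commutation $A(Bv) = B(Av)$, and the induction is pure bookkeeping. The only points that require a little care are verifying that the filtration is genuinely nested, i.e.\ $A_\lambda^{k-1} \subseteq A_\lambda^k$ (immediate from the definition, since $u \in A_\lambda^{k-2} \subseteq A_\lambda^{k-1}$ forces $v \in A_\lambda^k$), and noting that preservation of every finite stage $A_\lambda^k$ automatically yields preservation of the maximal extended eigenspace $A_\lambda = \lim_{k \to \infty} A_\lambda^k$, which is all that will be needed later.
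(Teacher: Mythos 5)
Your proof is correct and follows essentially the same route as the paper: induction on $k$, using $A(Bv) = B(Av) = \lambda(Bv) + Bu$ together with the inductive hypothesis that $B$ preserves $A_\lambda^{k-1}$. The extra remarks you add (identification with $\ker(A-\lambda\,\mathrm{Id})^k$, nestedness of the filtration, passage to the limit) are sound but not needed beyond what the paper already does.
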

\begin{lproof}
Prove this by induction. Assume that $B$ maps $A_{\lambda}^{k-1}$ to itself (which is definetly true for $k = 1$), and let $v_k$ be a section of $A_{\lambda}^k$. Then
\be
A(B(v_k)) &=& B(A(v_k)) \\
&=& \lambda B(v_k) + B(v_{k-1}).
\ee
Since $B(v_{k-1})$ must be a section of $A_{\lambda}^{k-1}$, then $B(v_k)$ is thus a section of $A_{\lambda}^{k}$.
\end{lproof}

\begin{lemm} \label{decom:res}
For an abelian algebra $\mf{d} \subset \mf{gl}(\mf{g})$, define $\wh{\mf{d}}$ as the subset of $\mf{d}$ consisting of matrixes with the maximal number of distinct eigen-values (this is well defined, as the number of distinct eigenvalues is an integer valued function, bounded above by $d$, the dimension of $\mf{g}$). Then
\begin{itemize}
\item $\wh{\mf{d}}$ is open dense in $\mf{d}$,
\item for a given $A \in \wh{\mf{d}}$, $\mf{g}$ decomposes as a sum of $V_j = A_{\lambda_j}$ for distinct eigenvalues $\lambda_j$,
\item the above decomposition does not depend on the choice of $A$ in $\wh{\mf{d}}$,
\item all elements of $\mf{d}$ have a unique eigenvalue on $V_j$.
\end{itemize}
\end{lemm}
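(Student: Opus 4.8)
The plan is to deduce all four assertions from a single structural fact: the simultaneous generalized (extended) eigenspace decomposition of the commuting family $\mf{d}$. Since $\mf{d}$ is abelian, I would first build a canonical decomposition $\mf{g} = \bigoplus_a V_a$ into common extended eigenspaces. Fix a basis $C_1,\dots,C_r$ of $\mf{d}$, decompose $\mf{g}$ into the maximal extended eigenspaces of $C_1$, and note that by the preceding lemma each $C_i$ preserves every $(C_1)_\lambda$; so I can restrict to each summand and iterate with $C_2,\dots,C_r$. This yields a decomposition indexed by tuples $(\lambda^{(1)},\dots,\lambda^{(r)})$ of eigenvalues, on each piece $W$ of which $C_i$ acts as $\lambda^{(i)}$ times the identity plus a nilpotent part. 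Because the $C_i$ commute, for a general $C=\sum_i t_i C_i$ the nilpotent parts are commuting nilpotents and hence add up to a nilpotent operator, so $C$ acts on $W$ as a scalar $\nu_W(C)$ plus a nilpotent. Taking traces gives $\nu_W(C)=\frac{1}{\dim W}\mathrm{tr}_W(C)$, which makes transparent that $\nu_W$ is a \emph{linear} functional on $\mf{d}$. Distinct pieces give distinct tuples, hence distinct functionals, so I may relabel the pieces by their weights $\nu_1,\dots,\nu_p$ and write $V_a=V_{\nu_a}$.

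This already delivers the last bullet: on $V_{\nu_a}$ every $C\in\mf{d}$ has the single eigenvalue $\nu_a(C)$, and the decomposition $\{V_{\nu_a}\}$ was constructed with no reference to any distinguished element. For a fixed $C$, the characteristic polynomial factors as $\prod_a (x-\nu_a(C))^{\dim V_{\nu_a}}$, so the distinct eigenvalues of $C$ are exactly the distinct values among $\nu_1(C),\dots,\nu_p(C)$; their number is at most $p$, with equality precisely when the $\nu_a(C)$ are pairwise distinct. Since the $\nu_a$ are pairwise distinct linear functionals, each locus $\{\nu_a=\nu_b\}$ with $a\neq b$ is a proper hyperplane in $\mf{d}$; the finite union $\bigcup_{a\neq b}\{\nu_a=\nu_b\}$ is closed and nowhere dense with nonempty complement. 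Hence the maximal number of distinct eigenvalues equals $p$, and $\wh{\mf{d}}$ is exactly the complement of this hyperplane union — an open dense set. That settles the first bullet.

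For the remaining two bullets, take $A\in\wh{\mf{d}}$, so the scalars $\lambda_a:=\nu_a(A)$ are pairwise distinct. Then $\lambda_a$ is attained only on $V_{\nu_a}$: on each other summand $V_{\nu_b}$ the operator $A-\lambda_a$ equals $(\lambda_b-\lambda_a)$ times the identity plus a nilpotent and is therefore invertible, whereas on $V_{\nu_a}$ it is nilpotent. Consequently $V_{\nu_a}$ is precisely the maximal extended eigenspace $A_{\lambda_a}$, and $\mf{g}=\bigoplus_a A_{\lambda_a}$, which is the second bullet. Since the right-hand summands coincide with the $A$-independent spaces $V_{\nu_a}$, the decomposition does not depend on the choice of $A$, confirming the third bullet.

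The main obstacle is the first paragraph: producing the simultaneous decomposition and, above all, the \emph{linearity} of the weights $\nu_a$ in $C$. The trace identity $\nu_W(C)=(\dim W)^{-1}\mathrm{tr}_W(C)$ is the cleanest route to linearity, and its only delicate input is that the nilpotent parts of the commuting $C_i$ really sum to a nilpotent operator, which holds because commuting nilpotent operators generate a nilpotent associative algebra. Once the weight decomposition with linear weights is in hand, everything else — openness, density via the hyperplane complement, and the identification $V_{\nu_a}=A_{\lambda_a}$ — is elementary.
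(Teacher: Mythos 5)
Your proof is correct, but it takes a genuinely different route from the paper's. The paper never constructs the simultaneous weight-space decomposition: it fixes a single $A$ realizing the maximal number of distinct eigenvalues and perturbs along lines $A' = A + xC$, choosing $x$ outside a finite set of ratios of eigenvalue differences so that no two eigenvalues of $A'$ collide; the maximality of $A$ then forces every $C \in \mf{d}$ to act with a single eigenvalue on each generalized eigenspace of $A$, and independence of the decomposition from $A$ is deduced from that. You instead build the common decomposition $\mf{g} = \oplus_a V_{\nu_a}$ for the whole abelian family first (iterating the ``commuting operators preserve extended eigenspaces'' lemma over a basis of $\mf{d}$), prove that the weights $\nu_a$ are \emph{linear} functionals via the trace identity $\nu_W(C) = (\dim W)^{-1}\mathrm{tr}_W(C)$ (the nilpotent parts being traceless, and a sum of commuting nilpotents being nilpotent), and then read off $\wh{\mf{d}}$ as the complement of the finite union of hyperplanes $\{\nu_a = \nu_b\}$. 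Your version buys several things: openness and density of $\wh{\mf{d}}$ become immediate from the hyperplane description --- whereas the paper's one-parameter perturbation only shows one can increase the eigenvalue count along certain lines, which does not by itself establish that the maximal locus is open in $\mf{d}$, so you have actually repaired a soft spot in the paper's argument; the independence of the $V_j$ from the choice of $A$ is tautological, since the $V_{\nu_a}$ are defined without reference to any distinguished element; and the last bullet is built into the construction rather than extracted by contradiction. The cost is the extra setup of the weight decomposition and the linearity argument, which the paper's more hands-on perturbation avoids. One small point worth making explicit in your write-up: the complement of the hyperplane union is nonempty because $\mf{d}$ (a complex, hence infinite-field, vector space) is not a finite union of proper subspaces --- you note this, and it is exactly what identifies the maximal eigenvalue count with the number $p$ of weights.
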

\begin{lproof}
Let $A \in \mf{d}$ have $m$ distinct eigenvalues, and $C \in \mf{d}$. Define the subset $\tau(A,C)$ of $\mbb{R}$ as $\{(\lambda_i^A - \lambda_j^A) / (\lambda_k^C - \lambda_l^C)\}$ for all the distinct eingenvalues $\lambda_i^A$ of $A$ and distinct eigenvalues $\lambda_k^C$ of $C$. This $S'$ is finite. Let $T(A,C) = \mbb{R} - \tau(A,C)$; it is an open dense set of $\mbb{R}$. Then if $x \in T(A,C)$,
\be
A' = A + xC,
\ee
must map each $A_k^1$ to itself (by definition), and must act on $A_k^1$ with at least one eigenvalue of the type $\lambda_k^A + x\lambda_j^C$ (since $A$ and $C$ commute, $C$ preserves $A_k^1$ so must have at least one eigenvector in $A_k^1$). By our choice of $x$, that eigenvalue is distinct for different $k$'s. Thus $A'$ has at least as many eigenvalues as $A$ does.

This shows that $\wh{\mf{d}}$ is open and dense in $\mf{d}$.

Now fix a given $A$ in $\wt{\mf{d}}$, and corresponding extended eigenspaces $V_j$ with $\mf{g} = \sum_j V_j$. Since $\mf{d}$ is commutative, every $C \in \mf{d}$ must map each $V_j$ to itself.

Now imagine that $C \in \mf{d}$ has two distinct eigenvalues $\lambda_1^C$ and $\lambda_2^C$ on a given $V_j$. Now $A$ must map $C_1^1 \cap A_j$ and $C_2^1 \cap A_j$ to themselves, and thus has eigenvectors on both these spaces; the eigenvalues must be $\lambda_j^A$. Then choosing $x \in T(A,C)$, we can see that
\be
A' = A + xC,
\ee
has at least as many distinct eigenvalues as $A$ on $V_k$, $k \neq j$, and has two distinct eigenvalues on $V_j$. Thus it has more distinct eigenvalues than $A$, a contradiction. From this we deduce that all $C \in \mf{d}$ must have a single eigenvalue on $V_j$. This further demonstrates that the definition of $V_j$ does not depend on the choice of $A$ in $\wt{\mf{d}}$.
\end{lproof}

We now return to proving the main theorem. Recall that $\mf{h}$ is the semi-direct product of a semi-simple algebra $\mf{s}$ with a solvable algebra $\mf{k}$.
\begin{prop}
The metric $B$ is non-degenerate on $\mf{k}$.
\end{prop}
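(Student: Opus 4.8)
The plan is to control the radical of $B|_{\mf{k}}$ by relating it to the radical of $B|_{\mf{h}}$, which is zero by hypothesis; the whole difficulty is the interaction with the semisimple part $\mf{s}$. Write $\rho=\mathrm{ad}\colon\mf{g}\hookrightarrow\mf{gl}(\mf{g})$ for the faithful adjoint embedding secured above, so $B(X,Y)=\mathrm{tr}(\rho(X)\rho(Y))$. Since $\mf{g}$ is complex and $\mf{k}$ is solvable I keep the upper-triangular basis already chosen; the diagonal-part map $D\colon\mf{k}\to\mathbb{C}^{d}$, $Y\mapsto(\rho(Y)_{ii})_i$, is then a homomorphism onto an abelian algebra, and its kernel $\mf{n}=\{Y\in\mf{k}:\rho(Y)\text{ is nilpotent}\}$ consists of the ad-nilpotent elements of $\mf{k}$. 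The entries of $D$ are precisely the weights of the ideal $\mf{k}$ acting on $\mf{g}$, so by the invariance lemma accompanying Lie's theorem each such weight vanishes on $[\mf{h},\mf{k}]$; hence $[\mf{h},\mf{k}]\subseteq\ker D=\mf{n}$, and in particular $[\mf{h},\mf{n}]\subseteq\mf{n}$, so $\mf{n}$ is an ideal of $\mf{h}$.

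The key step is to kill $\mf{n}$. Every element of $\mf{n}$ acts nilpotently and $\mf{n}$ is an ideal, so the ascending flag $0\subset\mf{g}^{\mf{n}}\subset\cdots$ built from the $\mf{n}$-invariants is preserved by all of $\mf{h}$ while $\rho(\mf{n})$ strictly lowers it (Engel). Thus $\rho(Y)\rho(Z)$ is nilpotent for $Y\in\mf{n}$, $Z\in\mf{h}$, giving $B(\mf{n},\mf{h})=0$; non-degeneracy of $B$ on $\mf{h}$ then forces $\mf{n}=0$. Consequently $[\mf{h},\mf{k}]\subseteq\mf{n}=0$, that is, $\mf{k}$ is central in $\mf{h}$ (in particular abelian).

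With this in hand the Proposition is immediate. Let $R=\{v\in\mf{k}:B(v,\mf{k})=0\}$ be the radical of $B|_{\mf{k}}$; invariance of $B$ makes $R$ an ideal of $\mf{h}$, and $[\mf{s},R]\subseteq[\mf{h},\mf{k}]=0$. Writing $\mf{s}=[\mf{s},\mf{s}]$ and using $B([A,B],v)=B(A,[B,v])=0$ for $v\in R$ and $A,B\in\mf{s}$ gives $B(R,\mf{s})=0$; together with $B(R,\mf{k})=0$ this yields $R\subseteq\mathrm{rad}(B|_{\mf{h}})=0$, so $B$ is non-degenerate on $\mf{k}$.

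The main obstacle is the vanishing $\mf{n}=0$: it rests on two standard characteristic-zero inputs, namely the invariance lemma (the weights of an ideal vanish on its bracket with the whole algebra) and the trace-vanishing property of nil ideals, so I would verify both carefully over $\mathbb{C}$. I expect this is also where Lemma \ref{decom:res} is intended to enter, since diagonalising the abelian algebra $D(\mf{k})$ of semisimple parts realises the weight-space decomposition $\mf{g}=\bigoplus_j V_j$ intrinsically and furnishes an alternative, basis-free route to the same conclusion. Finally, note that the argument in fact proves $\mf{k}=Z(\mf{h})$, from which reductivity follows at once; the Proposition is simply the convenient packaging of this fact for use in the remaining steps.
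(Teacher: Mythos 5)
Your proof is correct, but it takes a genuinely different and considerably more economical route than the paper. The paper argues by contradiction: it studies the degenerate part $\mf{l}=\mf{k}^\perp\cap\mf{k}$, shows via the pairing with part of $\mf{s}$ that $\mf{l}$ is an abelian sum of adjoint representations of simple factors, and then derives a contradiction by a matrix analysis (upper-triangular forms, the extended-eigenspace decomposition of Lemma \ref{decom:res}, and a Jordan-form computation in the case $V_1=\mf{g}$). You instead invoke two standard pieces of characteristic-zero structure theory: (i) $[\mf{h},\mathrm{rad}(\mf{h})]=[\mf{h},\mf{k}]$ acts nilpotently in every finite-dimensional representation of $\mf{h}$, in particular on $\mf{g}$ via the (faithful) adjoint embedding; and (ii) an ideal acting by nilpotent operators lies in the radical of the associated trace form (the Engel-flag argument), so $B(\mf{n},\mf{h})=0$ and non-degeneracy on $\mf{h}$ kills $\mf{n}$, whence $[\mf{h},\mf{k}]=0$. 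This not only proves the Proposition (your final computation with $\mf{s}=[\mf{s},\mf{s}]$ and invariance of $B$ is correct) but delivers $\mf{k}\subseteq Z(\mf{h})$ outright, so the paper's subsequent Lemma ($\mf{k}$ abelian) and final Proposition ($\mf{h}=\mf{s}\times\mf{k}$) become immediate, and Lemma \ref{decom:res} is not needed at all. The one spot to tighten is your justification of $[\mf{h},\mf{k}]\subseteq\mf{n}$: the diagonal characters you read off come from a flag that is only $\mf{k}$-stable, whereas the invariance lemma applies to weights of $\mf{k}$ on $\mf{h}$-modules; to make this airtight, pass to an $\mf{h}$-composition series of $\mf{g}$ (on each irreducible factor Lie's theorem plus the invariance lemma shows $\mf{k}$ acts by a single character vanishing on $[\mf{h},\mf{k}]$, and Jordan--H\"older identifies these with your diagonal entries), or simply cite the standard theorem that $[\mf{h},\mathrm{rad}(\mf{h})]$ is contained in the nilpotency ideal of every finite-dimensional representation. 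With that one-line repair your argument is complete and, in my view, cleaner than the original.
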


\begin{proof}
Proof by contradiction -- this result is the heart of the overall proof. We shall be working within $\mf{h}$; so, for instance $\mf{k}^{\perp}$ is to be understood as $\mf{k}^{\perp} \cap \mf{h}$.

Let $\mf{l} = \mf{k}^{\perp} \cap \mf{k}$. Since $\mf{k}$ is an ideal of $\mf{h}$, so is $\mf{k}^{\perp}$ and hence so are $\mf{l}$ and $\mf{l}^{\perp}$.

Note that $\mf{k} \subset \mf{l}^{\perp}$. Consequently, $\mf{l}^{\perp} / \mf{k}$ is an ideal of $\mf{s}$. Since $\mf{s}$ is semi-simple, we know what such ideals are like; let $\mf{s} = \oplus_j \mf{s}_j$ for simple $\mf{j}$, with
\be
\mf{l}^{\perp} / \mf{k} = \oplus_{j>m} \mf{s}_j,
\ee
for some $m$, and set $\mf{t} = \oplus_{j=1}^m \mf{s}_j$. Now fix a given Lie algebra embedding $\mf{t} \subset \mf{h}$, and note that $N$ gives a non-degenerate pairing between $\mf{t}$ and $\mf{l}$. Now decompose the subalgebra $\mf{t} \oplus \mf{l}$ in terms of irreducible representations of $\mf{t}$. On the $\mf{t}$ component, this is the adjoint representation by definition; via the pairing, $\mf{t}$ must act on $\mf{l}$ via the adjoint representation as well (as the adjoint rep. is self-dual). Thus
\be
\mf{l} = \oplus_{j=1}^m \mf{s}_j',
\ee
where $\mf{s}_j$ acts on $\mf{s}_j'$ via the adjoint representation, and $\mf{s}_j$ acts trivially on $\mf{s}_k'$ for $j\neq k$. We thus have a map $\phi$ mapping $\mf{s}_j$ to $\mf{s}_j'$, with the properties that for elements $a$ and $b$ of $\mf{s}_j$,
\be
[a, \phi(b)] = \phi[a,b] = [\phi(a),b].
\ee
This demonstrates that $\mf{l}$ must be abelian: for $\mf{s}_j$ must preserve the bracket on $\mf{s}_j'$, hence must preserve the ideal $[\mf{s}_j', \mf{s}_j']$. This implies that $[\mf{s}_j', \mf{s}_j'] = \mf{s}_j'$ or $[\mf{s}_j', \mf{s}_j'] = 0$; since $\mf{s}_j'$ is solveable, the second equality must hold. Thus we can start using Lemma \ref{decom:res}.

Note, however, that because $N$ degenerates on $\mf{s}_j'$, but is preserved by the action of $\mf{s}_j$, it must be expressed, for elements $a$ and $b$ of $\mf{s}_j$, as
\be
N(a,b)&=& \alpha_j B_j(a,b) \\
N(\phi(a),b) &=& \beta_j B_j(a,b) \\
N(\phi(a),\phi(b)) &=& 0,
\ee
where $B_j$ is the Killing form of $\mf{s}_j$ and $\alpha_j$, $\beta_j$ are constants with $\beta_j \neq 0$.

We now want to have elements $A$ of $\mf{s}_j'$ and $C$ of $\mf{s}_j$ such that
\begin{itemize}
\item $A \in \wt{\mf{s}_j'}$,
\item $[C,A] \in \wt{\mf{s}_j'}$,
\end{itemize}
Since $[\mf{s}_j,\mf{s}_j] = \mf{s}_j$ and $\mf{s}_j' = \phi(\mf{s}_j)$, any non-zero element of $\wt{\mf{s}_j'}$ must be expressible in the form $[C,A']$ for some $C \in \mf{s}_j$ and $A' \in \mf{s}_j'$. Since $\wt{\mf{s}_j'}$ is open dense in ${\mf{s}_j'}$, we can choose $A \in \wt{\mf{s}_j'}$ near $A'$ such that $[C,A] \in \wt{\mf{s}_j'}$.

We may view $\mf{s}_j'$ as a subalgebra of $\oplus_j \mf{gl}(V_j)$, the $V_j$ defined as in Lemma \ref{decom:res}. Since the different $\mf{gl}(V_j)$ commute, the projection of $\mf{s}_j'$ to $\mf{gl}(V_j)$ remains reductive. Hence we may choose a basis $\{e_i\}$ of $\mf{g}$ such that each element of $\mf{s}_j'$ is upper-triangular in each $V_j \otimes V_j^*$ block, and has no entries outside these blocks.

We will now need to distinguish two cases: where $V_1 \neq \mf{g}$ (hence the splitting $\mf{g} = \sum_j V_j$ of Lemma \ref{decom:res} is non-trivial) and $V_1 = \mf{g}$. We shall deal with the first case first; here $A$ must have more than one distinct eigenvalue.

\begin{lemm}
$C$ has no lower diagonal entries in the $V_k \otimes V_l^*$ spaces for $k \neq l$.
\end{lemm}
\begin{lproof}
Again, proof by contradiction. Fix any $k$ and $l$, and note that $[C,A]$ must send $V_k \otimes V_l^*$ to itself. Let $C_{ij}$ be a non-zero, lower diagonal entry of $C$ in $V_k \otimes V_l^*$. We can choose $C_{ij}$ so that it has the minimal $j-i$ of all such possibilities. Now the strictly upper-triangular components of $A$ must acting on $C_{ij}$ will result in entries with strictly higher $j-i$, and the diagonal entries of $A$ will send $C_{ij}$ to $\lambda_k^A - \lambda_l^A C_{ij}$. All entries in $V_k \otimes V_l^*$ with same $j-i$ as $C_{ij}$ will be treated the same way.

Thus, since $A \in \wt{\mf{s}_j'}$, $\lambda_k^A - \lambda_l^A \neq 0$ and thus $[C,A]$ is not upper-triangular -- an impossibility as $\mf{s}_j'$ is an ideal of $\mf{s}_j \oplus \mf{s}_j'$. The result then follows.
\end{lproof}
Thus we may see $C$ as
\be
C = \sum_j C_j + U,
\ee
where $C_j$ are endomorphisms of $V_j$ and $U$ is strictly upper triangular. Now
\be
[C,A] = \sum_j [C_j,A_j] + [U,A].
\ee
We are now ready to derive the contradiction. $[C,A]$ is upper-triangular and $[U,A]$ is stricly upper-triangular, so has no diagonal entries. Since $[C,A] \in \wt{\mf{s}_j'}$, it must have non-zero diagonal entries (or else it only has the single eigen-value zero) -- thus there must be a $j$ such that $[C_j, A_j]$ has non-zero diagonal entries. Thus $[C_j,A_j]$ must be upper-triangular, trace free, and with a non-zero entry on the diagonal. This means that it has more than one distinct entry in the diagonal, hence more than one distinct eigenvalue. This contradicts the result of Lemma \ref{decom:res} that $[C,A]$, as an element of $\mf{s}_j'$, must have a single eigenvalue on $V_j$. So we have a contradiction, as long as $V_1 \neq \mf{g}$.

Now assume $V_1 = \mf{g}$, with $d$ the dimension of $\mf{g}$. This means that every element of $A'$ of $\mf{s}_j'$ has a single eigenvalue, $\lambda_{A'}$. And since $0 = N(A',A') = d(\lambda_{A'})^2$, we must have all eigenvalues of elements of $\mf{s}_j'$ as zero.

Pick an element $a$ of $\mf{s}_j$ so that $B_j(a,a) \neq 0$ -- hence $N(a,\phi(a)) \neq 0$. However $[a,\phi(a)] = \phi[a,a] = 0$. Set $A = \phi(a)$ and $C = a$. We may put $A$ in Jordan normal form, and since $A$ is an element of $\mf{s}_j'$ it will have only zeroes down the diagonal. So $A$'s only potentialy non-zero entries are $A_{i,i+1}$.

Now consider the diagonal terms of $[C,A]$. They must be
\be
\sum_{i=1}^{d-1} A_{i,i+1}C_{i+1,i} (e_i \otimes e_i^* - e_{i+1} \otimes e_{i+1}^*).
\ee
Now $N(C,A) = \sum_{i=1}^{d-1} A_{i,i+1}C_{i+1,i} \neq 0$, so there exists a minimum $i$ such that $A_{i,i+1}C_{i+1,i} \neq 0$. Then $[C,A]_{ii} = A_{i,i+1}C_{i+1,i} \neq 0$, contradicting the fact that $[C,A] = 0$.

Since this whole mess resulted from the assumption that $\mf{l}$ was non-zero, we must have $\mf{l} = 0$ and hence $N$ (and $B$) is non-degenerate on $\mf{k}$.
\end{proof}

\begin{lemm}
$\mf{k}$ is abelian.
\end{lemm}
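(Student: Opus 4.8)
The plan is to combine the non-degeneracy just established with the fact that $\mf{k}$ sits inside $\mf{gl}(\mf{g})$ as an algebra of upper-triangular matrices. First I would recall that, having embedded $\mf{g}$ faithfully via the adjoint representation and chosen the basis $\{e_i\}$ so that $\mf{k}$ is upper-triangular, the form $B$ restricted to $\mf{k}$ is literally the trace form $N$ restricted to the matrix image of $\mf{k}$. The preceding Proposition tells us that $N$ is non-degenerate on $\mf{k}$, so it suffices to show that the derived algebra $[\mf{k},\mf{k}]$ lies in the radical of $N|_{\mf{k}}$.

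The key observation is that the commutator of two upper-triangular matrices is strictly upper-triangular: the diagonal of a product $XY$ of upper-triangular matrices is the entrywise product of the diagonals of $X$ and $Y$, hence symmetric in $X$ and $Y$, so $XY$ and $YX$ agree on the diagonal and $[X,Y]$ has vanishing diagonal. Consequently every element of $[\mf{k},\mf{k}]$ is strictly upper-triangular. Now for any strictly upper-triangular $Z$ and any upper-triangular $W$, the product $ZW$ is again strictly upper-triangular, so $N(Z,W) = \textrm{trace}(ZW) = 0$. Taking $Z \in [\mf{k},\mf{k}]$ and letting $W$ range over $\mf{k}$ shows that $[\mf{k},\mf{k}]$ is $N$-orthogonal to all of $\mf{k}$; note this orthogonality is correctly taken within $\mf{h}$, which is automatic since $[\mf{k},\mf{k}] \subseteq \mf{k} \subseteq \mf{h}$.

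Since $[\mf{k},\mf{k}] \subseteq \mf{k}$ and $N$ is non-degenerate on $\mf{k}$, the only element of $\mf{k}$ orthogonal to all of $\mf{k}$ is zero; hence $[\mf{k},\mf{k}] = 0$ and $\mf{k}$ is abelian. Faithfulness of the adjoint embedding guarantees that vanishing of the matrix commutators is equivalent to $\mf{k}$ being abelian as an abstract Lie algebra. I expect no serious obstacle here: the whole weight of the argument was carried by the preceding Proposition, and what remains is the elementary fact that the derived algebra of a triangular solvable algebra is strictly upper-triangular and therefore trace-orthogonal to the algebra itself.
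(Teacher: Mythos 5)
Your proof is correct and takes essentially the same route as the paper: in the basis triangularizing $\mathfrak{k}$, the derived algebra $[\mathfrak{k},\mathfrak{k}]$ is strictly upper-triangular, hence trace-orthogonal to all of $\mathfrak{k}$, and non-degeneracy of $N$ on $\mathfrak{k}$ forces it to vanish. You merely spell out the trace computation and the reduction to the abstract bracket a little more explicitly than the paper does.
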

\begin{lproof}
Choose a basis $\{e_j\}$ of $\mf{g}$ such that $\mf{k}$ is upper-triangular. Since $N$ is non-degenerate on $\mf{k}$, no element of $\mf{k}$ can be strictly upper-triangular (as then it would be orthogonal to $\mf{k}$). But $[\mf{k},\mf{k}]$ is striclty upper triangular; hence
\be
[\mf{k},\mf{k}] = 0.
\ee
\end{lproof}

\begin{prop}
$\mf{h}$ is the direct product of $\mf{s}$ with $\mf{k}$.
\end{prop}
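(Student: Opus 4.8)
The plan is to read off the splitting directly from the orthogonal geometry of $B$, reusing the facts already assembled in the proof of the preceding Proposition rather than analysing the $\mf{s}$-module structure of $\mf{k}$ by hand. Throughout I would continue to work inside $\mf{h}$, so $\mf{k}^{\perp}$ means $\mf{k}^{\perp}\cap\mf{h}$. Recall two things that are already in hand: first, because $\mf{k}$ is an ideal of $\mf{h}$, its orthogonal complement $\mf{k}^{\perp}$ is also an ideal of $\mf{h}$; second, the proof of the Proposition established that $\mf{l}=\mf{k}\cap\mf{k}^{\perp}=0$. Since $B$ is non-degenerate on $\mf{h}$, the dimension identity $\dim\mf{k}^{\perp}=\dim\mf{h}-\dim\mf{k}$ holds for the orthogonal complement of any subspace, and combined with $\mf{k}\cap\mf{k}^{\perp}=0$ this forces the vector-space decomposition $\mf{h}=\mf{k}\oplus\mf{k}^{\perp}$.

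The crucial step is then a one-line bracket computation. Both $\mf{k}$ and $\mf{k}^{\perp}$ are ideals, so $[\mf{k},\mf{k}^{\perp}]$ lies in $\mf{k}$ (as $\mf{k}$ absorbs brackets with all of $\mf{h}$) and simultaneously in $\mf{k}^{\perp}$ (for the same reason applied to $\mf{k}^{\perp}$); hence $[\mf{k},\mf{k}^{\perp}]\subseteq\mf{k}\cap\mf{k}^{\perp}=0$. Thus the two ideals commute, and the vector-space sum $\mf{h}=\mf{k}\oplus\mf{k}^{\perp}$ is in fact a direct product of Lie algebras, $\mf{h}=\mf{k}\times\mf{k}^{\perp}$.

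It remains to identify the factor $\mf{k}^{\perp}$ with the semi-simple part. Since $\mf{k}$ is the solvable radical, the quotient $\mf{h}/\mf{k}\cong\mf{k}^{\perp}$ is semi-simple, so $\mf{k}^{\perp}$ is a semi-simple ideal complementing the radical; by the uniqueness (up to conjugacy) of the Levi factor I may take $\mf{s}=\mf{k}^{\perp}$, giving $\mf{h}=\mf{s}\times\mf{k}$ with $\mf{k}$ abelian by the previous Lemma. In particular $\mf{k}$ is central and $\mf{h}$ is reductive, which also closes out the main Theorem.

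I do not expect a genuine obstacle here: the entire difficulty has already been absorbed into the Proposition (non-degeneracy of $B$ on $\mf{k}$) and the subsequent Lemma (that $\mf{k}$ is abelian). The only point needing a word of care is the final identification of the abstract Levi factor $\mf{s}$ with the concrete complement $\mf{k}^{\perp}$, which is handled by invoking conjugacy of Levi subalgebras; everything else reduces to the dimension count for $\mf{k}^{\perp}$ and the standard observation that two ideals with trivial intersection commute.
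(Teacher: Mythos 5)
Your proof is correct and follows the same overall decomposition as the paper ($\mf{h}=\mf{k}\oplus\mf{k}^{\perp}$ as a sum of ideals, then show the two factors commute), but you justify the key commuting step differently. The paper argues via the invariance of $B$: for $s\in\mf{k}^{\perp}$ and $k_1,k_2\in\mf{k}$ it computes $B([s,k_1],k_2)=-B(s,[k_1,k_2])=0$, using that $\mf{k}$ is abelian, and then concludes $[s,k_1]=0$ from the non-degeneracy of $B$ on $\mf{k}$. You instead use the purely algebraic fact that the bracket of two ideals lies in their intersection, so $[\mf{k},\mf{k}^{\perp}]\subseteq\mf{k}\cap\mf{k}^{\perp}=0$; this is shorter and does not invoke the preceding Lemma that $\mf{k}$ is abelian (though that Lemma is still needed to conclude that $\mf{k}$ is central and hence that $\mf{h}$ is reductive). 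Your extra care in identifying $\mf{k}^{\perp}$ with the Levi factor via the semi-simple quotient $\mf{h}/\mf{k}$ is a reasonable way to make precise the step the paper dispatches with ``thus $\mf{k}^{\perp}\cong\mf{s}$''; in fact, once the two ideals are known to commute, it is enough to note that $\mf{k}^{\perp}\cong\mf{h}/\mf{k}$ is semi-simple, without appealing to conjugacy of Levi subalgebras. Both arguments are sound; yours trades the metric computation for a standard structural observation about ideals.
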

\begin{proof}
We have $\mf{k}^{\perp}$ as an ideal in $\mf{h}$. Since $\mf{k}$ is non-degenerate, $\mf{k}^{\perp} \cap \mf{k} = 0$ and thus $\mf{k}^{\perp} \cong \mf{s}$. Let $s$ be any element of $\mf{k}^{\perp}$, and $k_1, k_2$ elements of $\mf{k}$. Then
\be
B([s,k_1],k_2) = -B(s,[k_1,k_2]) = 0,
\ee
as $\mf{k}$ is abelian. Since $\mf{k}$ is an ideal, $[s,k_1] \in \mf{k}$; since $B$ is non-degenerate on it, $[s,k_1] = 0$.

Thus the action of $\mf{k}^{\perp}$ on $\mf{k}$ is trivial, and
\be
\mf{h} = \mf{s} \times \mf{k}.
\ee
And so $\mf{h}$ is reductive.
\end{proof}

\end{document}